\theoremstyle{plain}
\newtheorem*{thmA}{Theorem A}
\newtheorem*{thmB}{Theorem B}
\newtheorem{thm}{Theorem}[section]
\newtheorem{lem}[thm]{Lemma}
\theoremstyle{definition}
\newtheorem{dfn}[thm]{Definition}
\newcommand{\F}{\mathbb{F}}
\newcommand{\Z}{\mathbb{Z}}
\newcommand{\C}{\mathbb{C}}
\renewcommand{\P}{\mathbb{P}}
\newcommand{\No}{\mathcal{N}}
\newcommand{\WW}{\mathcal{W}}
\DeclareMathOperator{\Cl}{Cl}
\DeclareMathOperator{\Ker}{Ker}
\DeclarePairedDelimiter\ceil{\lceil}{\rceil}
\DeclarePairedDelimiter\floor{\lfloor}{\rfloor}
\begin{document}

\title{Beauville structures in $p$-central quotients}

\author[\c{S}.\ G\"ul]{\c{S}\"ukran G\"ul}
\address{Department of Mathematics\\ Middle East Technical University\\
06800 Ankara, Turkey}
\email{gsukran@metu.edu.tr}

\keywords{Beauville groups; free group; free product; $p$-central quotients \vspace{3pt}}

\thanks{The author is supported by the Spanish Government, grant MTM2014-53810-C2-2-P, the Basque Government, grants IT753-13 and IT974-16, and T\"UB\.{I}TAK-B\.{I}DEB-2214/A}.

\begin{abstract}
We prove a conjecture of Boston that if $p\geq 5$, all $p$-central quotients of the free group on two generators and of the free product of two cyclic groups of order $p$ are Beauville groups. In the case of the free product, we also determine Beauville structures in $p$-central quotients when $p=3$. As a consequence, we give an explicit infinite family of Beauville $3$-groups, which is different from the only one that was known up to date.
\end{abstract}

\maketitle

\section{Introduction}

A \emph{Beauville surface\/} of unmixed type is a  compact complex surface isomorphic to 
$(C_1\times C_2)/G$, where $C_1$ and $C_2$ are algebraic curves of genus at least $2$ and $G$ is a finite group acting freely on $C_1\times C_2$ and faithfully on the factors $C_i$ such that $C_i/G\cong \P_1(\C)$ and the covering map $C_i\rightarrow C_i/G$ is ramified over three points for $i=1,2$.
Then the group $G$ is said to be a \emph{Beauville group\/}.

It is easy to formulate the condition for a finite group $G$ to be a Beauville group in purely group-theoretical terms. For a couple of elements $x,y \in G$, we define
\[
\Sigma(x,y)
=
\bigcup_{g\in G} \,
\Big( \langle x \rangle^g \cup \langle y \rangle^g \cup \langle xy \rangle^g \Big),
\]
that is, the union of all subgroups of $G$ which are conjugate to $\langle x \rangle$, to 
$\langle y \rangle$ or to $\langle xy \rangle$. Then $G$ is a Beauville group if and only if the following conditions hold:
\begin{enumerate}
	\item $G$ is a $2$-generator group.
	\item There exists a pair of generating sets $\{x_1,y_1\}$ and $\{x_2,y_2\}$ of $G$ such that 
$\Sigma(x_1,y_1) \cap \Sigma(x_2,y_2)=1$.
\end{enumerate}
Then $ \{x_1,y_1\}$ and $\{x_2,y_2\}$ are said to form a \emph{Beauville structure\/} for $G$.

In 2000, Catanese \cite{cat} proved that a finite abelian group is a Beauville
group if and only if it is isomorphic to $C_n \times C_n$, where $\gcd(n,6)=1$. On the other hand, all finite quasisimple groups other than $A_5$ and $SL_2(5)$ are Beauville groups \cite{FMP,FMPc} (see also  \cite{GLL} and \cite{GM}).

If $p$ is a prime, Barker, Boston and Fairbairn
\cite{BBF} have shown that the smallest non-abelian Beauville $p$-groups for $p=2$ and $p=3$ are of order $2^7$ and $3^5$, respectively. They have also proved that there are non-abelian Beauville $p$-groups of order $p^n$ for every $p\geq 5$ and every $n \geq 3$. On the other hand, in \cite{BBPV}, it has been shown that there are Beauville $2$-groups of arbitrarily high order. The existence of infinitely many Beauville $3$-groups has been settled in the affirmative in \cite{SV} and \cite{GJ}; however, these results do not yield explicit groups. Fern\'andez-Alcober and G\"ul \cite{FG} have recently given the first explicit infinite family of Beauville $3$-groups, by considering quotients of the Nottingham group over $\F_3$.

In \cite{bos}, Boston conjectured that if $p\geq5$ and $F$ is either the free group on two generators or the free product of two cyclic groups of order $p$, then its $p$-central quotients $F/\lambda_n(F)$ are Beauville groups. In this paper we prove Boston's conjecture. In fact, in the case of the free product, we extend the result to $p=3$.

The main results of this paper are as follows.

\begin{thmA}
Let $F=\langle x,y \rangle$ be the free group on two generators. Then a $p$-central quotient $F/\lambda_n(F)$ is a Beauville group if and only if $p\geq 5$ and $n\geq 2$.
\end{thmA}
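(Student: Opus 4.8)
The plan is to realise $G := F/\lambda_n(F)$ as a $2$-generated $p$-group with Frattini quotient $V := G/\Phi(G) \cong C_p\times C_p$, and to translate the Beauville condition into a statement about the points of $\P_1(\F_p)$, i.e.\ the lines of $V$. First I would record two structural facts about the last nontrivial term $\lambda_{n-1}(F)/\lambda_n(F)$ of the series: it is central in $G$ and has exponent $p$, since $[\lambda_{n-1},F]\le\lambda_n$ and $\lambda_{n-1}^p\le\lambda_n$. Because $G$ is a $p$-group, a nontrivial element lies in $\Sigma(x_1,y_1)\cap\Sigma(x_2,y_2)$ only if the unique subgroup of order $p$ (the socle) of some conjugate of one of $\langle x_1\rangle,\langle y_1\rangle,\langle x_1y_1\rangle$ coincides with that of some conjugate of one of $\langle x_2\rangle,\langle y_2\rangle,\langle x_2y_2\rangle$. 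Since for $\bar w\neq 0$ the element $w^{p^{n-2}}$ generates this socle and lies in the central subgroup $\lambda_{n-1}$, conjugation fixes it; thus $\Sigma(x_1,y_1)\cap\Sigma(x_2,y_2)=1$ if and only if the three subgroups $\langle x_1^{p^{n-2}}\rangle,\langle y_1^{p^{n-2}}\rangle,\langle(x_1y_1)^{p^{n-2}}\rangle$ are distinct from the corresponding three for the second pair.

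The heart of the argument is then to compute these socles in terms of $V$. On the associated graded $\bigoplus_i\lambda_i/\lambda_{i+1}$ the $p$-power map induces maps $\pi\colon\lambda_i/\lambda_{i+1}\to\lambda_{i+1}/\lambda_{i+2}$, and for $p$ odd a Hall--Petrescu computation shows $\pi$ is $\F_p$-linear (the obstruction $\binom p2$ vanishes mod $p$). Iterating, $\Pi:=\pi^{\,n-2}\colon V\to\lambda_{n-1}$ is linear and, using that the graded Lie algebra of the lower $p$-central series of a \emph{free} group is free, $\Pi$ is injective with $\Pi(\bar w)=w^{p^{n-2}}\neq 1$ whenever $\bar w\neq 0$ (so every such $w$ has order $p^{n-1}$). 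Consequently $\langle\bar w\rangle\mapsto\langle w^{p^{n-2}}\rangle$ is a well-defined injection from the points of $\P_1(\F_p)$ to the subgroups of order $p$ of $\lambda_{n-1}$.

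Putting these together, $\{x_1,y_1\}$ and $\{x_2,y_2\}$ form a Beauville structure precisely when the two triples of lines $\langle\bar x_i\rangle,\langle\bar y_i\rangle,\langle\overline{x_iy_i}\rangle$ ($i=1,2$) are disjoint, that is, when they give six distinct points of $\P_1(\F_p)$. The three lines arising from a single generating pair are automatically distinct, so the existence of a Beauville structure is equivalent to $|\P_1(\F_p)|=p+1\ge 6$. For $p\ge 5$ there remain at least $p-2\ge 3$ points after using $\langle\bar x\rangle,\langle\bar y\rangle,\langle\overline{xy}\rangle$ for the first pair, and any three of them are realised by a second generating pair (choose representatives on two of the lines and rescale so that their sum lands on the third); this produces the required structure for every $n\ge 2$. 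For $p\in\{2,3\}$ we have $p+1\le 4<6$, so two disjoint triples cannot exist and no $p$-central quotient is a Beauville group; the case $n=2$, where $G\cong C_p\times C_p$, recovers Catanese's criterion. Together with the triviality of the $n=1$ quotient this yields the stated equivalence.

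I expect the main obstacle to be the structural lemma underlying the second paragraph: the linearity and, above all, the injectivity of the iterated $p$-power map $\Pi$, together with the exponent bound $\exp G=p^{n-1}$. These rest on an explicit understanding of the associated graded Lie algebra of the lower $p$-central series of the free group --- the free restricted-type object in which the $p$-operation raises degree by one --- where freeness is exactly what prevents the $p$-powers of independent elements from colliding. The prime $p=2$, where $\pi$ fails to be additive, would have to be treated separately, although there the point count already forces the negative conclusion.
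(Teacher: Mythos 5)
Your proposal is correct and follows essentially the same route as the paper: both reduce the Beauville condition to the fact that for $p$ odd the $p^{n-2}$-th power map induces an injection from the $p+1$ maximal subgroups (the lines of $G/\Phi(G)$) into the order-$p$ subgroups of the central section $\lambda_{n-1}(F)/\lambda_n(F)$, and then conclude by the count $p+1\ge 6$. The only real differences are that the paper establishes the key injectivity (your map $\Pi$) by an elementary exponent-sum argument in the free group combined with the Hall--Petrescu formula, rather than by invoking freeness of the associated graded restricted Lie algebra, and that it exhibits the explicit second generating pair $\{uv^2,uv^4\}$ where you give an existence argument over $\P_1(\F_p)$.
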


\begin{thmB}
Let $F=\langle x,y \mid x^p , y^p \rangle$ be the free product of two cyclic groups of order $p$. Then a $p$-central quotient $F/\lambda_n(F)$ is a Beauville group if and only if $p\geq 5$ and $n\geq 2$ or $p=3$ and $n\geq 4$.
\end{thmB}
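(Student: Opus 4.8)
The plan is to pass to $G=F/\lambda_n(F)$ and encode everything in the Frattini quotient. Since $F^{\mathrm{ab}}=C_p\times C_p$ has exponent $p$ one has $\lambda_2(F)=F'$, so $G/G'\cong C_p\times C_p$ and $\Phi(G)=G'$ for every $n\geq 2$; in particular $G$ is $2$-generated. To each $g\notin G'$ I attach its \emph{type}, the line $\langle\bar g\rangle\in\P^1(\F_p)$ spanned by the image $\bar g\in G/G'$. Because conjugation is trivial modulo $G'$, every element of $\Sigma(x,y)$ outside $G'$ has type in $\{\langle\bar x\rangle,\langle\bar y\rangle,\langle\overline{xy}\rangle\}$, while inside $G'$ one has $\langle x\rangle^g\cap G'=\langle x^p\rangle^g$ and likewise for $y$ and $xy$. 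Thus for two generating pairs I first compare the two triples of types, and then the two \emph{lower parts} $\Sigma(x_i,y_i)\cap G'=\bigcup_{g}\bigl(\langle x_i^p\rangle^g\cup\langle y_i^p\rangle^g\cup\langle(x_iy_i)^p\rangle^g\bigr)$. Whenever the six types are pairwise distinct, a common element of $\Sigma(x_1,y_1)\cap\Sigma(x_2,y_2)$ must lie in $G'$, so it suffices to make the two lower parts meet trivially.

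I would treat sufficiency for $p\geq 5$ first. Here $|\P^1(\F_p)|=p+1\geq 6$, so the two generating triples can be chosen with all six types distinct, and it remains to separate the lower parts inside $G'$. Using that $x$ and $y$ already have order $p$ (killing two of the three terms of a pair built on them), together with the regular behaviour of $p$-th powers and commutator collection when $\gcd(p,6)=1$, a suitable choice of words in $x,y$ makes the two lower parts disjoint; this runs parallel to the proof of Theorem~A. For $n=2$ the group is $C_p\times C_p$ and Catanese's criterion applies since $\gcd(p,6)=1$. The easy necessity cases are disposed of at once: $F/\lambda_1(F)$ is trivial; for $p=3$, $F/\lambda_2(F)\cong C_3\times C_3$ is excluded by Catanese and $F/\lambda_3(F)$ has order $3^3$, below the order $3^5$ of the smallest Beauville $3$-group; and for $p=2$ the free product is the infinite dihedral group, whose $p$-central quotients are the finite dihedral $2$-groups $D_{2^n}$. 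A dihedral group is never Beauville, because each generating pair contributes to its $\Sigma$ a nontrivial subgroup of the cyclic rotation subgroup $R$, and any two such subgroups share the unique involution of $R$.

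The heart of the argument is $p=3$, where $|\P^1(\F_3)|=4$ forces the two generating triples to share at least two types, so the disjoint-type shortcut is unavailable; worse, two elements of the same type conjugate to a common power of $x$ (or of $y$) produce identical $\Sigma$-contributions, so $x$ and $y$ cannot simply be reused in both pairs. The decisive advantage of the free product over the free group is that $x$, $y$ and all their conjugates have order $3$, populating the types $\langle\bar x\rangle$ and $\langle\bar y\rangle$ with order-$3$ cyclic subgroups. I would therefore place the two shared types at $\langle\bar x\rangle$ and $\langle\bar y\rangle$, take the first pair to be $\{x,y\}$, and choose for the second pair order-$3$ elements of these same two types that are \emph{not} conjugate into $\langle x\rangle$, respectively $\langle y\rangle$; then the two conjugacy classes of order-$3$ subgroups in each shared type are disjoint, while the unshared third types $\langle\overline{xy}\rangle$ and the remaining line are handled as before. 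Carrying this out requires a genuine structural analysis of $G=F/\lambda_n(F)$ for $p=3$: producing, for $n\geq 4$, enough order-$3$ elements of the types of $x$ and $y$ that are non-conjugate with trivially intersecting conjugate closures, and computing the needed commutators in $G'$. This is where I expect the real difficulty, and it also pins down the bound $n\geq 4$: only once $n\geq 4$ is $G'$ large enough to house such extra order-$3$ elements and to separate the shared-type conjugacy classes, the cases $n\leq 3$ having already been excluded above.
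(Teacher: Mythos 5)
Your reduction to ``types'' in $G/G'\cong C_p\times C_p$ plus ``lower parts'' inside $G'$ is a sound framework, and your necessity arguments are fine (the direct dihedral argument for $p=2$ replaces the paper's citation of \cite[Lemma 3.7]{BCG}, and the $3^3<3^5$ count for $p=3$, $n=3$ matches the paper). But the sufficiency half has a genuine gap, and it sits exactly where the real work is: separating the lower parts. For the first pair $\{x,y\}$ the only nontrivial contribution to $\Sigma\cap G'$ comes from $\langle (xy)^p\rangle$ and its conjugates, and for the second pair from the $p$-th powers of three words of order generally much larger than $p$; you assert that ``a suitable choice of words'' separates these ``parallel to the proof of Theorem A.'' That cannot work as stated: Theorem A's separation device is the linear independence of $x^{p^{n-2}}$ and $y^{p^{n-2}}$ modulo $\lambda_n(F)$, which degenerates completely in the free product since $x^p=y^p=1$. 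The paper's actual mechanism is different and is the heart of the proof: it constructs an epimorphism $\psi$ from $G=F/\lambda_n(F)$ onto the order-$p^n$ quotient $P$ of the unique infinite pro-$p$ group of maximal class $\langle s\rangle\ltimes\Z_p^{p-1}$ (possible because every element of $P\smallsetminus P_1$ has order $p$), sends $u\mapsto s^{-1}$, $v\mapsto ss_1$ so that $\psi(uv)=s_1\in P_1$ while $\psi(b)\in P\smallsetminus P_1$ has order $p$ for every $b$ in the second triple, pins down $o(uv)=o(s_1)=p^{\lceil (n-1)/(p-1)\rceil}$ exactly via Easterfield's bound $\exp\Omega_1(G)\leq p^k$, and then lifts the trivial intersections back through $\psi$ by the order-preservation lemma (Lemma \ref{homomorphism}). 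No trace of this (or any workable substitute) appears in your proposal, so the case $a=xy$ versus the second triple is unproved.

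The same gap propagates to your $p=3$ plan. Your idea of choosing, in the two shared types, order-$3$ elements not conjugate into $\langle x\rangle$ and $\langle y\rangle$ is genuinely close to what the paper does (it takes $(uz)^{-1}$ and $vt$ with $z\notin\{[u,l]\}$, $t\notin\{[v,l]\}$ and invokes Lemmas \ref{non-conjugate} and \ref{intersection}, the former needing that $G$ is not of maximal class). But you again defer the elements of the third and fourth types --- which have order $3^{\lceil (n-1)/2\rceil}>3$ --- to the unestablished $p\geq5$ mechanism, whereas the paper handles them with the same epimorphism onto the maximal class group, refined to intermediate normal subgroups $N$ with $N/\lambda_n(F)\leq\Ker\psi$ to get every order $3^n$. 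Finally, your heuristic that $n\geq4$ is needed ``for $G'$ to be large enough'' is not the actual reason: the bound comes from the fact that the smallest Beauville $3$-group has order $3^5=|F/\lambda_4(F)|$, and the paper proves $n=4$ works by identifying $F/\lambda_4(F)$ with that unique group.
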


As a corollary of Theorem B, we give Beauville groups of order $3^n$ for every $n\geq 5$. We will see later in Theorem \ref{non-coincidence} that this infinite family only coincides at the group of order $3^5$ with the explicit infinite family of Beauville $3$-groups in \cite{FG}.

\vspace{10pt}

\noindent
\textit{Notation.\/}
We use standard notation in group theory. If $G$ is a group, then we denote by $\Cl_G(x)$ the conjugacy class of the element $x\in G$. Also, if $p$ is a prime, then we write $G^{p^i}$ for the subgroup generated by all powers $g^{p^i}$ as $g$ runs over $G$ and $\Omega_i(G)$ for the subgroup generated by the elements of $G$ of order at most $p^i$. The exponent of $G$, denoted by $\exp G$, is the maximum of the orders of all elements of $G$.

\section{The free group on two generators}

In this section, we give the proof of Theorem A. We begin by recalling the definition of $p$-central series for the convenience of the reader.

\begin{dfn}
For any group $G$, the normal series
\[
G=\lambda_1(G) \geq \lambda_2(G)\geq \dots \geq \lambda_n(G)\geq \dots
\]
given by $\lambda_n(G)=[\lambda_{n-1}(G),G]\lambda_{n-1}(G)^p$ for $n>1$ is called the \emph{$p$-central series} of $G$.
\end{dfn}
Then a quotient group $G/\lambda_n(G)$ is said to be a \emph{$p$-central quotient\/} of $G$.
To prove the main theorems, we need the following properties of the subgroups $\lambda_n(G)$ (see \cite{HB}, Definition 1.4 and Theorem 1.8, respectively): we have
\begin{equation}
\label{alternative def}
\lambda_n(G)= \gamma_1(G)^{p^{n-1}}\gamma_2(G)^{p^{n-2}}\dots \gamma_n(G),
\end{equation}
and any element of $\lambda_n(G)$ can be written in the form
\begin{equation}
\label{form of element}
a_1^{p^{n-1}}a_2^{p^{n-2}}\dots a_n \quad
\text{for some}
\quad
a_i\in \gamma_i(G).
\end{equation}

Also observe that if $\exp G/G'=p$ then $\lambda_n(G)=\gamma_n(G)$, since for any $i,j\geq1$ we have $\gamma_i(G)^{p^j}\leq \gamma_{i+j}(G)$.

\begin{lem}
\label{pn-2 powers}
Let $G$ be a group and $x,y \in G$. For $n\geq 3$,  we have
\[
(xy)^{p^{n-2}} \equiv x^{p^{n-2}}y^{p^{n-2}} \pmod { \lambda_n(G)}.
\]
\end{lem}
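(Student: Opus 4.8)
The plan is to obtain $(xy)^{p^{n-2}}$ from $x^{p^{n-2}}y^{p^{n-2}}$ via the Hall--Petrescu collection formula and then to show that every correction term lands in $\lambda_n(G)$. Concretely, setting $m = p^{n-2}$, the Hall--Petrescu identity produces elements $c_i \in \gamma_i(G)$, for $2 \le i \le m$, such that
\[
x^{m} y^{m} = (xy)^{m}\, c_2^{\binom{m}{2}} c_3^{\binom{m}{3}} \cdots c_m^{\binom{m}{m}}.
\]
Since $\lambda_n(G)$ is normal in $G$, it suffices to prove that the product $\prod_{i=2}^{m} c_i^{\binom{m}{i}}$ lies in $\lambda_n(G)$; rearranging the identity then gives $(xy)^{m} \equiv x^{m} y^{m} \pmod{\lambda_n(G)}$. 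As $\lambda_n(G)$ is a subgroup, it is in fact enough to check that each individual factor $c_i^{\binom{m}{i}}$ belongs to $\lambda_n(G)$.

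For the factor-by-factor check I would use the description $\lambda_n(G) = \gamma_1(G)^{p^{n-1}} \cdots \gamma_n(G)$ from \eqref{alternative def}, which yields $\gamma_i(G)^{p^{n-i}} \le \lambda_n(G)$ for $1 \le i \le n$ and $\gamma_i(G) \le \lambda_n(G)$ for $i \ge n$. The terms with $i \ge n$ cause no trouble, since already $c_i \in \gamma_i(G) \le \lambda_n(G)$. For the remaining range $2 \le i \le n-1$, the factor $c_i^{\binom{m}{i}}$ lies in $\lambda_n(G)$ as soon as $p^{n-i}$ divides $\binom{m}{i} = \binom{p^{n-2}}{i}$, because then $c_i^{\binom{m}{i}}$ is a power of $c_i^{p^{n-i}} \in \gamma_i(G)^{p^{n-i}} \le \lambda_n(G)$.

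Everything therefore reduces to the $p$-adic estimate $v_p\!\big(\binom{p^{n-2}}{i}\big) \ge n-i$ for $2 \le i \le n-1$, and this is the one genuine computation. I would evaluate the valuation exactly by writing $\binom{p^{a}}{i} = \tfrac{p^{a}}{i}\binom{p^{a}-1}{i-1}$ with $a = n-2$ and using that $\binom{p^{a}-1}{i-1} \equiv (-1)^{i-1} \pmod p$ is prime to $p$, so that $v_p\!\big(\binom{p^{n-2}}{i}\big) = (n-2) - v_p(i)$. The required inequality $(n-2) - v_p(i) \ge n-i$ is then equivalent to $i - v_p(i) \ge 2$, which for odd $p$ holds for every $i \ge 2$: if $v_p(i) = 0$ this is immediate, while if $v_p(i) = k \ge 1$ then $i \ge p^{k} \ge 3^{k} \ge k+2$, so $i - v_p(i) \ge 2$. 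I expect this valuation bookkeeping, rather than the formal manipulation, to be the main point to get right, and it is precisely here that the hypothesis on $p$ enters, since the borderline case $p=2$, $i=2$ would violate the bound.
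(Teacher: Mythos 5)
Your proof is correct and rests on the same two pillars as the paper's: the Hall--Petrescu collection formula and the factorization $\lambda_n(G)=\gamma_1(G)^{p^{n-1}}\gamma_2(G)^{p^{n-2}}\cdots\gamma_n(G)$ of (\ref{alternative def}). The difference is one of packaging. The paper invokes the congruence form of Hall--Petrescu from Huppert--Blackburn, in which the correction terms are already bundled into $\gamma_2(G)^{p^{n-2}}\prod_{r=1}^{n-2}\gamma_{p^r}(G)^{p^{n-2-r}}$, so it only has to check that each of these subgroups lies in $\lambda_n(G)$; you instead start from the raw identity $x^my^m=(xy)^m c_2^{\binom{m}{2}}\cdots c_m^{\binom{m}{m}}$ and carry out the $p$-adic bookkeeping $v_p\bigl(\binom{p^{n-2}}{i}\bigr)=n-2-v_p(i)\ge n-i$ yourself, which amounts to re-deriving the cited congruence. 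All the steps check out: the exact valuation of $\binom{p^a}{i}$ is right, and the reduction to $i-v_p(i)\ge 2$ is handled correctly for odd $p$. What your version buys, besides self-containedness, is that it makes visible a hypothesis the lemma's statement suppresses: your inequality fails precisely at $p=2$, $i=2$, and correspondingly the paper's own step $\gamma_{p^r}(G)^{p^{n-2-r}}\le\lambda_{p^r+n-2-r}(G)\le\lambda_n(G)$ needs $p^r\ge r+2$, which also fails at $p=2$, $r=1$ (and the congruence is genuinely false for $p=2$, e.g.\ in the free group with $n=3$, where $(xy)^2x^{-2}y^{-2}$ is conjugate to $[y,x]\notin\lambda_3$). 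So your explicit computation is a useful sanity check on the range of validity; the only cost is redoing work that the citation already provides.
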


\begin{proof}
By the  Hall-Petrescu formula (see \cite{HB}, Lemma 1.1), we have
\begin{equation}
\label{hall-petrescu}
(xy)^{p^{n-2}} \equiv x^{p^{n-2}}y^{p^{n-2}}\pmod{\gamma_2(G)^{p^{n-2}}\prod_{r=1}^{n-2} 
\gamma_{p^r}(G)^{p^{n-2-r}}}.
\end{equation}
Now the result follows, since by (\ref{alternative def}), $\gamma_2(G)^{p^{n-2}}\leq \lambda_n(G)$ and for $1\leq r \leq n-2$ we have
\[
\gamma_{p^r}(G)^{p^{n-2-r}}\leq \lambda_{p^{r}+n-2-r}(G) \leq \lambda_n(G).
\]

\end{proof}
Note that if $y\in \lambda_2(G)$ in Lemma \ref{pn-2 powers}, then
\begin{equation}
\label{elements in same coset}
(xy)^{p^{n-2}} \equiv x^{p^{n-2}} \pmod{\lambda_n(G)}.
\end{equation}

\vspace{0.3cm}
Before we proceed to prove Theorem A, we will need to introduce a lemma.

Let $F=\langle x,y\rangle$ be the free group on two generators. Notice that for $n\geq 2$,
$\Phi(F/\lambda_n(F))$ coincides with $\lambda_2(F)/\lambda_n(F)$, and thus elements outside $\lambda_2(F)$ are potential generators in $F/\lambda_n(F)$. In order to determine Beauville structures in the quotients $F/\lambda_n(F)$, it is fundamental to control $p^{n-2}$nd powers of elements outside $\lambda_2(F)$ in these quotients groups.

\begin{lem}
\label{linearly independent}
Let $F=\langle x,y \rangle$ be the free group on two generators. Then $x^{p^{n-2}}$ and $y^{p^{n-2}}$ are linearly independent modulo $\lambda_n(F)$ for $n\geq 2$. 
\end{lem}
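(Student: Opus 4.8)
The plan is to detect the independence of $x^{p^{n-2}}$ and $y^{p^{n-2}}$ inside a controlled abelian quotient of $F/\lambda_n(F)$, namely the one coming from abelianization. First I would show that the abelianization map $\theta\colon F\to F/\gamma_2(F)\cong \Z\times\Z$ carries $\lambda_n(F)$ onto $p^{n-1}(\Z\times\Z)$. This is immediate from the alternative description (\ref{alternative def}): since $\theta$ kills every $\gamma_i(F)$ with $i\geq 2$, the only surviving factor of $\lambda_n(F)=\gamma_1(F)^{p^{n-1}}\gamma_2(F)^{p^{n-2}}\cdots\gamma_n(F)$ is $\gamma_1(F)^{p^{n-1}}=F^{p^{n-1}}$, whose image is exactly $p^{n-1}(\Z\times\Z)$. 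Consequently $\theta$ induces a surjective homomorphism $\pi\colon F/\lambda_n(F)\to (\Z/p^{n-1}\Z)\times(\Z/p^{n-1}\Z)$ sending $x\mapsto(1,0)$ and $y\mapsto(0,1)$.

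Applying $\pi$ to the two elements in question gives $\pi(x^{p^{n-2}})=(p^{n-2},0)$ and $\pi(y^{p^{n-2}})=(0,p^{n-2})$. Each of these has order exactly $p$ in $(\Z/p^{n-1}\Z)^2$, and the cyclic subgroups they generate meet trivially; hence their images are linearly independent over $\F_p$. To transfer this back, note first that $x^{p^{n-2}}$ and $y^{p^{n-2}}$ genuinely behave like vectors modulo $\lambda_n(F)$: both lie in $\lambda_{n-1}(F)$, so they commute modulo $[\lambda_{n-1}(F),F]\leq\lambda_n(F)$, and their $p$th powers $x^{p^{n-1}},y^{p^{n-1}}$ lie in $F^{p^{n-1}}\leq\lambda_n(F)$. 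Thus for $a,b\in\{0,1,\dots,p-1\}$ the expression $(x^{p^{n-2}})^a(y^{p^{n-2}})^b=x^{p^{n-2}a}y^{p^{n-2}b}$ is a legitimate $\F_p$-linear combination, and supposing it lies in $\lambda_n(F)$ and applying $\pi$ yields $(ap^{n-2},bp^{n-2})\equiv(0,0)\pmod{p^{n-1}}$, forcing $p\mid a$ and $p\mid b$, hence $a=b=0$.

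The argument is short, and the only point that requires genuine care is the choice of detecting quotient: a single copy of $\Z/p^{n-1}\Z$ cannot separate the two generators, so one must use the full rank-two quotient $(\Z/p^{n-1}\Z)^2$, which is exactly what makes both coordinates available simultaneously. The crux is therefore the identity $\theta(\lambda_n(F))=p^{n-1}(\Z\times\Z)$ obtained from (\ref{alternative def}); once this is in hand, everything reduces to an elementary computation in a finite abelian group.
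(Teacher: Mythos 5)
Your proof is correct and is in essence the same argument as the paper's: both detect the putative relation in the abelianization $F/\gamma_2(F)\cong\Z\times\Z$, where $\lambda_n(F)$ becomes $p^{n-1}(\Z\times\Z)$ and the two $p^{n-2}$nd powers become visibly independent modulo $p^{n-1}$. The paper packages this as an element computation using (\ref{form of element}) together with the exponent-sum criterion for membership in $\gamma_2(F)$, whereas you package it as a quotient map onto $(\Z/p^{n-1}\Z)^2$ derived from (\ref{alternative def}); the mathematical content is identical, and your version is, if anything, slightly cleaner.
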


\begin{proof}
We argue by way of contradiction. Suppose that  $y^{ip^{n-2}} \equiv x^{p^{n-2}}\pmod{\lambda_n(F)}$. It follows from (\ref{form of element}) that $x^{-p^{n-2}}y^{ip^{n-2}}=a_1^{p^{n-1}}a_2^{p^{n-2}}\dots a_n $ for some $a_j \in \gamma_j(F)$, and then we have $y^{-ip^{n-2}}x^{p^{n-2}}a_1^{p^{n-1}} \in \gamma_2(F)$. Write $a_1= x^ky^lz$ for some $z\in \gamma_2(F)$ and some $k,l \in \Z$. Then $x^{p^{n-2}(1+kp)}y^{p^{n-2}(lp-i)}\in \gamma_2(F)$. On the other hand, an element of the free group $F$ belongs to $\gamma_2(F)$ if and only if  the exponent sum of both generators is zero. Hence we get $p^{n-2}(1+kp)=0$, which is a contradiction.
\end{proof}

As a consequence of Lemma \ref{linearly independent}, $x$ and $y$ have order $p^{n-1}$ modulo $\lambda_n(F)$.

By (\ref{elements in same coset}), if we want to know $p^{n-2}$nd powers of all elements outside $\lambda_2(F)$ in $F/\lambda_n(F)$, it is enough to know the power of each element in the set $\{y,xy^i \mid 0\leq i \leq p-1 \}$. Also, by Lemma \ref{pn-2 powers}, we have
\[
(xy^i)^{p^{n-2}} \equiv x^{p^{n-2}}y^{ip^{n-2}} \pmod{ \lambda_n(F)}
\quad
\text{for}
\quad
1\leq i \leq p-1,
\]
and since $x^{p^{n-2}}$ and $y^{p^{n-2}}$ are linearly independent modulo $\lambda_n(F)$ by Lemma \ref{linearly independent}, the following lemma is straightforward.

\begin{lem}
\label{pn-2th powers}
If $G=F/\lambda_n(F)$, the power subgroups $M^{p^{n-2}}$ are all different and of order $p$ in 
$\lambda_{n-1}(F)/\lambda_n(F)$, as  $M$ runs over the $p+1$ maximal subgroups of $G$. In particular, all elements in $M\smallsetminus \Phi(G)$ are of order $p^{n-1}$.
\end{lem}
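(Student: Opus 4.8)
The plan is to read off everything we need from the already-assembled package of facts about $p^{n-2}$nd powers in $G=F/\lambda_n(F)$. The $p+1$ maximal subgroups of $G$ are exactly the index-$p$ subgroups, and since $\Phi(G)=\lambda_2(F)/\lambda_n(F)$, each maximal subgroup $M$ corresponds to one of the $p+1$ lines through the origin in the two-dimensional $\F_p$-space $G/\Phi(G)$. By (\ref{elements in same coset}), the $p^{n-2}$nd power of any element of $M\smallsetminus\Phi(G)$ depends only on its image in $G/\Phi(G)$, so to compute $M^{p^{n-2}}$ I only need a single representative generator of the line: either $y$ (for the line spanned by $y$) or $xy^i$ for $0\le i\le p-1$ (for the other $p$ lines). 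This reduces the whole statement to understanding the $p+1$ elements $y^{p^{n-2}}$ and $(xy^i)^{p^{n-2}}$ modulo $\lambda_n(F)$.

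Next I would invoke the two lemmas just proved. By Lemma \ref{pn-2 powers}, $(xy^i)^{p^{n-2}}\equiv x^{p^{n-2}}y^{ip^{n-2}}\pmod{\lambda_n(F)}$, and by Lemma \ref{linearly independent}, $x^{p^{n-2}}$ and $y^{p^{n-2}}$ are linearly independent modulo $\lambda_n(F)$. Thus the $p+1$ power elements are
\[
y^{p^{n-2}},\ x^{p^{n-2}},\ x^{p^{n-2}}y^{p^{n-2}},\ \dots,\ x^{p^{n-2}}y^{(p-1)p^{n-2}},
\]
which are precisely the $p+1$ nonzero vectors spanning the distinct one-dimensional subspaces of the $\F_p$-span of $x^{p^{n-2}}$ and $y^{p^{n-2}}$. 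Linear independence forces each to be nontrivial modulo $\lambda_n(F)$ and no two to generate the same subgroup, so the subgroups $M^{p^{n-2}}=\langle\,(\text{representative})^{p^{n-2}}\,\rangle$ are all distinct. Each has order $p$ because, by Lemma \ref{linearly independent} and the remark following it, the representatives have order $p^{n-1}$ modulo $\lambda_n(F)$, so raising to $p^{n-2}$ gives a nontrivial element whose $p$th power is trivial. To place $M^{p^{n-2}}$ inside $\lambda_{n-1}(F)/\lambda_n(F)$, note from (\ref{alternative def}) that $\gamma_1(F)^{p^{n-2}}=F^{p^{n-2}}\le\lambda_{n-1}(F)$, so all these $p^{n-2}$nd powers lie in $\lambda_{n-1}(F)/\lambda_n(F)$.

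For the final sentence, I would argue that every element $z\in M\smallsetminus\Phi(G)$ has order exactly $p^{n-1}$. Since $z^{p^{n-2}}=M^{p^{n-2}}$ is nontrivial of order $p$, the order of $z$ is at least $p^{n-1}$; and since $z$ lies in the quotient of $F$ by $\lambda_n(F)$, which has nilpotency/exponent bound giving $z^{p^{n-1}}\in\lambda_n(F)$ (using $\gamma_1(F)^{p^{n-1}}\le\lambda_n(F)$ from (\ref{alternative def})), the order is at most $p^{n-1}$. Hence it equals $p^{n-1}$.

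The only real subtlety — the \emph{main point} rather than an obstacle — is checking that the reduction in the first paragraph genuinely covers \emph{all} elements outside $\Phi(G)$ and not merely the chosen representatives: one must confirm that an arbitrary element of a maximal subgroup $M$ differs from its line-representative by an element of $\Phi(G)=\lambda_2(F)/\lambda_n(F)$, which is exactly the hypothesis $y\in\lambda_2(G)$ needed to apply (\ref{elements in same coset}). Once that congruence is in place, the distinctness and order claims are immediate consequences of linear independence, so there is no hard computation remaining; the lemma is indeed ``straightforward'' as the text asserts.
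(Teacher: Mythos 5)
Your proposal is correct and follows exactly the route the paper intends: the paper itself gives no separate proof but declares the lemma ``straightforward'' from equation (\ref{elements in same coset}) (reduction to the representatives $y, xy^i$), Lemma \ref{pn-2 powers}, and the linear independence of $x^{p^{n-2}}$ and $y^{p^{n-2}}$ from Lemma \ref{linearly independent} --- which is precisely the package of facts you assemble. Your expansion, including the observation that $F^{p^{n-2}}\leq\lambda_{n-1}(F)$ and that $\lambda_{n-1}(F)/\lambda_n(F)$ is elementary abelian so the power subgroups have order $p$, fills in the details faithfully.
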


After these preliminaries, we can now prove Theorem A.

\begin{thm}
A $p$-central quotient $F/\lambda_n(F)$ is a Beauville group  if and only if $p\geq 5$ and $n\geq 2$.
\end{thm}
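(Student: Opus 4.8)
The plan is to recast the Beauville condition for $G=F/\lambda_n(F)$ as a combinatorial statement about its $p+1$ maximal subgroups, the decisive tool being the power subgroups of Lemma~\ref{pn-2th powers}. The case $n=1$ gives the trivial group, and for $n=2$ we have $G\cong C_p\times C_p$, which by Catanese's theorem \cite{cat} is a Beauville group exactly when $\gcd(p,6)=1$, i.e.\ when $p\geq5$; so I assume $n\geq3$ from now on. Writing $\Phi=\Phi(G)=\lambda_2(F)/\lambda_n(F)$, we have $G/\Phi\cong\F_p^2$, so the maximal subgroups of $G$ biject with the $p+1$ points of $\P^1(\F_p)$, and I call the maximal subgroup containing a given $g\in G\setminus\Phi$ the \emph{direction} of $g$. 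Two facts are recorded. First, conjugation preserves directions: $\langle z\rangle^g=\langle z^g\rangle$ and $z^g\equiv z\pmod{\gamma_2(G)}$ with $\gamma_2(G)\leq\Phi$, so $z^g$ has the same direction as $z$. Second, for a generator $z$ of direction $M$, Lemma~\ref{pn-2th powers} gives that $z^{p^{n-2}}\neq1$ and $z^{p^{n-2}}\in M^{p^{n-2}}$, a group of order $p$; hence the unique minimal subgroup of $\langle z\rangle$ equals $M^{p^{n-2}}$, and these socles are pairwise distinct as $M$ ranges over the maximal subgroups.

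The heart of the argument is the following reduction: for generating pairs $\{x_1,y_1\}$ and $\{x_2,y_2\}$ one has $\Sigma(x_1,y_1)\cap\Sigma(x_2,y_2)=1$ if and only if the direction-triples $T_i=\{[x_i],[y_i],[x_iy_i]\}$ for $i=1,2$ are disjoint. Note first that each $T_i$ genuinely consists of three distinct directions: $[x_i]\neq[y_i]$ since $\{x_i,y_i\}$ generates, and $[x_iy_i]$ equals neither, as $\bar x_i+\bar y_i$ is proportional to neither $\bar x_i$ nor $\bar y_i$. Now suppose $1\neq w\in\Sigma(x_1,y_1)\cap\Sigma(x_2,y_2)$. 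Then $w$ lies in a cyclic subgroup conjugate to $\langle x_1\rangle$, $\langle y_1\rangle$ or $\langle x_1y_1\rangle$, whose direction lies in $T_1$ and whose socle is the corresponding $M^{p^{n-2}}$; likewise $w$ lies in a cyclic subgroup with direction in $T_2$ and socle $M'^{p^{n-2}}$. Since $\langle w\rangle$ is a nontrivial cyclic $p$-group it contains both socles, which must therefore coincide; by the distinctness in Lemma~\ref{pn-2th powers} this forces the two directions to agree, so $T_1\cap T_2\neq\varnothing$. Conversely, if $M\in T_1\cap T_2$, then the order-$p$ group $M^{p^{n-2}}$ is the socle of a cyclic subgroup appearing in each of the two unions, hence lies in both, and the intersection is nontrivial.

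Finally I realize or obstruct disjoint triples. Every set $\{P,Q,R\}$ of three distinct points of $\P^1(\F_p)$ arises as a direction-triple: fix $x_1$ of direction $P$; as $t$ runs through $\F_p^{\times}$ the direction $[\bar x_1+t\bar y_1]$ runs injectively through all of $\P^1(\F_p)\setminus\{P,Q\}$, so one may choose $y_1$ of direction $Q$ with $x_1y_1$ of direction $R$, and then $\{x_1,y_1\}$ generates $G$. Consequently, two generating pairs with disjoint direction-triples exist precisely when $\P^1(\F_p)$ has at least six points, that is when $p+1\geq6$, i.e.\ $p\geq5$; splitting six distinct points into two triples and realizing each then produces, via the reduction, a Beauville structure. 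On the other hand, for $p=2$ and $p=3$ one has $p+1<6$, so any two of the three-element triples $T_1,T_2$ must meet, and the reduction shows no Beauville structure exists. Combining the cases yields the asserted equivalence. The one substantive point is the identification of the socle $M^{p^{n-2}}$ and its dependence only on the direction, but this is exactly what Lemma~\ref{pn-2th powers} supplies; everything else is the elementary incidence count on $\P^1(\F_p)$.
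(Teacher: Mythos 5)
Your proof is correct and follows essentially the same route as the paper: both arguments rest entirely on Lemma \ref{pn-2th powers} (each maximal subgroup contributes a distinct order-$p$ socle $M^{p^{n-2}}$, and conjugation preserves the maximal subgroup containing a generator), reduce the Beauville condition to disjointness of the two triples of maximal subgroups, and then count against the $p+1$ points of $\P^1(\F_p)$. The only difference is presentational: you package the reduction as an if-and-only-if criterion and realize arbitrary disjoint triples, whereas the paper exhibits the explicit pairs $\{u,v\}$ and $\{uv^2,uv^4\}$.
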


\begin{proof}
For simplicity let us call $G$ the quotient group $F/\lambda_n(F)$. We first show that if $p=2$ or $3$, then $G$ is not a Beauville group. By way of contradiction, suppose that $\{u_1,v_1\}$ and $\{u_2,v_2\}$ form a Beauville structure for $G$. Since $G$ has $p+1\leq 4$ maximal subgroups, we may assume that $u_1$ and $u_2$ are in the same maximal subgroup. Then by (\ref{elements in same coset}), we have
$\langle u_1^{p^{n-2}} \rangle=\langle u_2^{p^{n-2}} \rangle$, which is a contradiction.

Thus we assume that $p\geq 5$. First of all, notice that if $n=2$, $G\cong C_p \times C_p$ is a Beauville group, by Catanese's criterion. So we will deal with the case $n\geq3$. Let $u$ and $v$ be the images in $G$ of $x$ and $y$, respectively. We claim that $\{u,v\}$ and $\{uv^2, uv^4\}$ form a Beauville structure for $G$. If $A=\{u,v,uv\}$ and $B=\{uv^2,uv^4,uv^2uv^4\}$, we need to show that
\begin{equation}
\label{check}
\langle a^g\rangle \cap \langle b^h\rangle=1,
\end{equation}
for all $a\in A$, $b\in B$, and $g,h \in G$. Observe that $a^g$ and $b^h$ lie in different maximal subgroups of $G$ in every case, since $u$ and $v$ are linearly independent modulo $\Phi(G)$ and $p\geq 5$.

Now, all elements  $a\in A$ and $b\in B$ are of order $p^{n-1}$, by Lemma \ref{pn-2th powers}. If (\ref{check}) does not hold, then
\[
\langle (a^g)^{p^{n-2}}\rangle=\langle (b^h)^{p^{n-2}}\rangle,
\]
and again by Lemma \ref{pn-2th powers}, $a^g$ and $b^h$ lie in the same maximal subgroup of $G$, which is a contradiction. We thus complete the proof that $G$ is a Beauville group.
\end{proof}

\section{ The free product of two cyclic groups of order $p$}
Now we focus on the free product $F=\langle x,y \mid x^p,y^p \rangle$ of two cyclic groups of order $p$. Notice that since $F/F'$ has exponent $p$, we have $\lambda_n(F)=\gamma_n(F)$ for all $n\geq1$.

We start with an easy lemma whose proof is left to the reader.

\begin{lem}
\label{homomorphism}
Let $\psi \colon  G_1 \to G_2$ be a group homomorphism, let $x_1, y_1 \in G_1$ and $x_2= \psi(x_1)$, $y_2=\psi(y_1)$. If $o(x_1)=o(x_2)$ then the condition $\langle x_2^{\psi(g)} \rangle \cap \langle y_2^{\psi(h)} \rangle= 1$ implies that $\langle x_1^{g} \rangle \cap \langle y_1^{h} \rangle= 1$ for  $g,h \in G_1$.
\end{lem}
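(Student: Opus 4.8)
The plan is to argue by contraposition: assuming $\langle x_1^{g} \rangle \cap \langle y_1^{h} \rangle \neq 1$, I would construct a nontrivial element lying in $\langle x_2^{\psi(g)} \rangle \cap \langle y_2^{\psi(h)} \rangle$, contradicting the hypothesis $\langle x_2^{\psi(g)} \rangle \cap \langle y_2^{\psi(h)} \rangle = 1$. The whole argument rests on the elementary but crucial identity that $\psi$ turns conjugated powers into conjugated powers: since $\psi$ is a homomorphism, $\psi(x_1^{g}) = \psi(g)^{-1}\psi(x_1)\psi(g) = x_2^{\psi(g)}$, and likewise $\psi(y_1^{h}) = y_2^{\psi(h)}$. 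Consequently $\psi$ maps the cyclic subgroup $\langle x_1^{g} \rangle$ into $\langle x_2^{\psi(g)} \rangle$ and $\langle y_1^{h} \rangle$ into $\langle y_2^{\psi(h)} \rangle$.

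Concretely, I would take a nontrivial element $z \in \langle x_1^{g} \rangle \cap \langle y_1^{h} \rangle$ and write $z = (x_1^{g})^{i} = (y_1^{h})^{j}$ for suitable integers $i,j$. Applying $\psi$ gives $\psi(z) = (x_2^{\psi(g)})^{i} \in \langle x_2^{\psi(g)} \rangle$ and simultaneously $\psi(z) = (y_2^{\psi(h)})^{j} \in \langle y_2^{\psi(h)} \rangle$, so $\psi(z)$ automatically lands in the target intersection. It remains only to verify that $\psi(z) \neq 1$, and this is the one point where care is needed.

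The step I would flag as the genuine (if mild) obstacle is precisely showing that the image $\psi(z)$ does not collapse to the identity; everything else is routine. This is exactly where the hypothesis $o(x_1) = o(x_2)$ enters. Since conjugation preserves order, we have $o(x_1^{g}) = o(x_1) = o(x_2) = o(x_2^{\psi(g)})$; call this common value $m$. Now $z = (x_1^{g})^{i} \neq 1$ forces $i \not\equiv 0 \pmod{m}$, and because $x_2^{\psi(g)}$ also has order $m$, the same congruence condition yields $\psi(z) = (x_2^{\psi(g)})^{i} \neq 1$. Thus $\psi(z)$ is a nontrivial element of $\langle x_2^{\psi(g)} \rangle \cap \langle y_2^{\psi(h)} \rangle$, the desired contradiction. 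Note that only the order condition on the $x$-side is required, since the element $z$ was chosen as a power of $x_1^{g}$; no assumption on $o(y_1)$ is needed, which matches the statement of the lemma.
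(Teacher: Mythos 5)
Your proof is correct: the key observation that $\psi(x_1^{g}) = x_2^{\psi(g)}$ together with $o(x_1^{g}) = o(x_1) = o(x_2) = o(x_2^{\psi(g)})$ ensures a nontrivial element of $\langle x_1^{g}\rangle \cap \langle y_1^{h}\rangle$ maps to a nontrivial element of $\langle x_2^{\psi(g)}\rangle \cap \langle y_2^{\psi(h)}\rangle$, and you correctly identify that the order hypothesis is needed only on the $x$-side. The paper leaves this lemma's proof to the reader, and your argument is exactly the intended routine one.
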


To prove the main theorem we also need a result of Easterfield \cite{eas} regarding the exponent of $\Omega_i(G)$. More precisely, if $G$ is a $p$-group, then for every $i,k\geq 1$, the condition $\gamma_{k(p-1)+1}(G)=1$ implies that
\begin{equation}
\label{exp of omega}
 \exp \Omega_i(G)\leq p^{i+k-1}.
\end{equation}

\vspace{0.2cm}

A key ingredient of the proof of Theorem B will be based on $p$-groups of maximal class with some specific properties. Let $G=\langle s \rangle \ltimes A$ where $s$ is of order $p$ and $A\cong \Z_p^{p-1}$. The action of $s$ on $A$ is via $\theta$, where $\theta$ is defined by the companion matrix of the $p$th cyclotomic polynomial $x^{p-1}+\dots +x+1$. Then $G$ is the only infinite pro-$p$ group of maximal class. Since $s^p=1$ and $\theta^{p-1}+\dots +\theta+1$ annihilates $A$, this implies that for every $a\in A$,
\[
(sa)^p
=
s^p a^{s^{p-1}+\dots +s+1}
=
1.
\]
Thus all elements in $G\smallsetminus A$ are of order $p$. An alternative construction of $G$ can be given by using the ring of cyclotomic integers (see Example 7.4.14 \cite{LM}).

Let $P$ be a finite quotient of $G$ of order $p^n$ for $n\geq 3$. Let us call $P_1$ the abelian maximal subgroup of $P$ and $P_i=[P_1, P, \overset{i-1}{\ldots}, P]=\gamma_i(P)$ for $i\geq 2$. Then one can easily check that 
$\exp P_i= p^ {\ceil*{\frac{n-i}{p-1}} }$ and every element in $P_i\smallsetminus P_{i+1}$ is of order $p^ {\ceil*{\frac{n-i}{p-1}} }$.

Now we can begin to determine which $p$-central quotients of $F$ are Beauville groups. We first assume that $p=2$. The free product $F$ of two cyclic groups of order $2$ is the infinite dihedral group $D_{\infty}$. Then by Lemma 3.7 in \cite{BCG}, no finite quotient of $F$ is a Beauville group.

We next deal separately with the cases $p\geq 5$ and $p=3$.

\begin{thm}
\label{p>3}
If $p\geq 5$ then the $p$-central quotient $F/\lambda_n(F)$ is a Beauville group for every $n\geq 2$.
\end{thm}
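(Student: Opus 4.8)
The strategy is to mimic the proof for the free group (Theorem A), but with the crucial modification forced by the different structure of $F=\langle x,y\mid x^p,y^p\rangle$: here $x$ and $y$ themselves have order $p$ in every quotient, so they are useless as ``test elements'' for detecting conjugacy classes of cyclic subgroups. The whole game in the free-group case was that $u,v,uv,uv^2,uv^4$ all had the maximal order $p^{n-1}$, and that their $p^{n-2}$nd powers landed in distinct maximal subgroups; here I must instead work with the elements $u^{-1}v=xy^{-1}$ and its relatives, which are the natural elements of $\lambda_2(F)\smallsetminus\lambda_3(F)$ that \emph{do} have large order. First I would set $G=F/\lambda_n(F)$ and, as in Theorem A, dispose of $n=2$ immediately via Catanese's criterion, since $G\cong C_p\times C_p$ and $p\geq5$. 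So I assume $n\geq 3$.

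\textbf{Key steps.} The main technical input I would establish is an analogue of Lemma~\ref{pn-2th powers} adapted to the free product. Since $u$ and $v$ have order $p$, I would look one level down: the elements of the form $(u^{-1}v)$, or more generally commutators/products lying in $\lambda_2(F)\smallsetminus\lambda_3(F)$, play the role that $u,v$ played before. Concretely, I would choose a candidate Beauville structure of the shape $\{x,y\}$ paired with $\{x^{a}y^{b},\,x^{c}y^{d}\}$ for suitable exponents (mod $p$), chosen so that the three relevant products in each triple lie in different maximal subgroups of $G/\Phi(G)\cong C_p\times C_p$; this is the same parity-of-linear-independence bookkeeping as in Theorem A, and it goes through because $p\geq5$ gives enough room among the $p+1$ maximal subgroups. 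The real content is to verify $\Sigma(x_1,y_1)\cap\Sigma(x_2,y_2)=1$: I would reduce this, via Lemma~\ref{homomorphism}, to checking that the high powers of the six generating elements generate distinct cyclic subgroups. The engine for computing those powers is the maximal-class $p$-group $P$ introduced just before the statement, together with the exponent formula $\exp P_i=p^{\ceil*{\frac{n-i}{p-1}}}$ and the Easterfield bound \eqref{exp of omega}; mapping $G$ onto an appropriate maximal-class quotient lets me read off that the commutator-type generators attain the maximal possible order in their layer, and that distinct elements give distinct cyclic-subgroup intersections.

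\textbf{The main obstacle.} The hard part will be the order computation and the distinctness of the cyclic subgroups generated by high powers. In the free-group case this was clean because $x^{p^{n-2}},y^{p^{n-2}}$ were honestly linearly independent in a single abelian layer (Lemma~\ref{linearly independent}); in the free product, $x^{p}=y^{p}=1$, so the elements attaining maximal order live inside $\lambda_2(F)$ and their orders are governed by the more delicate step-wise exponents $p^{\ceil*{\frac{n-i}{p-1}}}$ of the maximal-class structure. I expect the crux to be verifying that a suitable power of each chosen generator is nontrivial and that two generators from the two different generating sets never share a nontrivial cyclic subgroup after conjugation — this is exactly where the homomorphism Lemma~\ref{homomorphism} and the maximal-class model $P$ do the heavy lifting, reducing an intersection condition in $G$ to a transparent statement in $P$. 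Once that order-control lemma is in place, the conjugacy part is routine, because conjugation preserves the image in $G/\Phi(G)$ and hence preserves which maximal subgroup an element and its powers sit in.
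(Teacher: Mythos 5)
Your proposal assembles the right ingredients --- the maximal-class group $P$, the Easterfield bound \eqref{exp of omega}, Lemma \ref{homomorphism}, and a candidate structure of the shape $\{u,v\}$ versus $\{u^av^b,u^cv^d\}$ --- but it stops short of the one verification that carries the whole theorem, and the mechanism you sketch for it would not work. You propose to imitate Theorem A by showing that ``high powers of the six generating elements generate distinct cyclic subgroups.'' That argument succeeded for the free group only because all six elements had the same maximal order $p^{n-1}$ and their $p^{n-2}$nd powers all landed in the central layer $\lambda_{n-1}(F)/\lambda_n(F)$, where distinctness is conjugation-invariant (Lemma \ref{pn-2th powers}). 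In the free product no such statement is available: the elements $uv^2$, $uv^4$, $uv^2uv^4$ have orders you do not control, their maximal powers need not lie in a single central layer, and there is no analogue of Lemma \ref{linearly independent}. The paper avoids this entirely by splitting into two cases. For $a\in\{u,v\}$, which have order $p$, a nontrivial intersection $\langle a^g\rangle\cap\langle b^h\rangle$ forces $\langle a^g\rangle\subseteq\langle b^h\rangle$ and hence $\langle a\Phi(G)\rangle=\langle b\Phi(G)\rangle$, impossible since $p\geq 5$; no power computation is needed. The only element of large order that must be handled is $a=uv$, and there the single order computation $o(uv)=p^{\ceil*{\frac{n-1}{p-1}}}=o(s_1)$ (from Easterfield plus $\Omega_1(G)=G$) licenses Lemma \ref{homomorphism}: one pushes the intersection condition into $P$, where $\psi(uv)=s_1\in P_1$ while every $b\in B$ maps into $P\smallsetminus P_1$ and therefore has order $p$ and no conjugate inside the normal subgroup $P_1$. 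Your plan names $P$ and Lemma \ref{homomorphism} as ``the engine'' but never states this dichotomy or the actual reason the intersection in $P$ is trivial, which is the substance of the proof.

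A smaller but real error: you assert that the elements of large order ``live inside $\lambda_2(F)$'' and single out $xy^{-1}$ as an element of $\lambda_2(F)\smallsetminus\lambda_3(F)$. Since $F/F'$ has exponent $p$, we have $\lambda_2(F)=\Phi(F)=F'$, and $xy^{-1}$ has nontrivial image in $F/F'$, so it does not lie in $\lambda_2(F)$. The element that actually attains large order in the argument is $uv$, which lies \emph{outside} $\Phi(G)$ (it maps to $s_1\in P_1\smallsetminus P'$, and $P_1$ is the abelian maximal subgroup of $P$, not its Frattini subgroup). Getting this location right matters, because the case analysis above hinges on every element of $A\cup B$ being a generator, i.e.\ lying outside $\Phi(G)$.
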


\begin{proof}
For simplicity let us call $G$ the quotient group $F/\lambda_n(F)$. Observe that $\Omega_1(G)=G$.

If $n=2$ then $G\cong C_p\times C_p$ is a Beauville group, by Catanese's criterion. Thus we assume that $n\geq 3$. Let $P$ be the $p$-group of maximal class of order $p^n$ which is mentioned above and let $s\in P\smallsetminus P_1$ and $s_1\in P_1\smallsetminus P'$. Since all elements in $P \smallsetminus P_1$ are of order $p$ and $\lambda_n(P)=1$, the map
\begin{align*} 
 \psi \colon G & \longrightarrow P \\ 
 u& \longmapsto s^{-1}\\
 v& \longmapsto ss_1,
\end{align*}
where $u$ and $v$ are the images of $x$ and $y$, 
is well-defined and an epimorphism.

Set $k=\ceil*{\frac{n-1}{p-1}}$. Since 
$\psi(uv)=s_1$, we have $o(uv) \geq o(s_1)=p^k$. On the other hand, $\gamma_{k(p-1)+1}(G)\leq \gamma_n(G)=1$. Then by (\ref{exp of omega}), we get $\exp G \leq p^k$, and consequently
$o(uv)=p^k$. 

We are now ready to show that $G$ is a Beauville group. We claim that $\{u,v\}$ and $\{uv^2,uv^4\}$ form a Beauville structure for $G$. Let $A=\{u,v,uv\}$ and $B=\{uv^2,uv^4,uv^2uv^4\}$. Assume first that $a=u$ or $v$, which are elements of order $p$, and $b\in B$. If $\langle a^g \rangle \cap \langle b^h \rangle \neq 1$ for some $g,h \in G$, then $\langle a^g \rangle \subseteq \langle b^h \rangle$, and hence $\langle a\Phi(G) \rangle=\langle b\Phi(G) \rangle$, which is a contradiction since $p\geq 5$. Next we assume that $a=uv$. Since $p\geq 5$, for every $b \in B$ we have $\psi(b) \in P\smallsetminus P_1$, which is of order $p$. Thus for all $g,h\in G$ we have
$\langle s_1^{\psi(g)}\rangle \cap \langle \psi(b)^{\psi(h)}\rangle=1$. Since $o(uv)=o(s_1)$, it then follows from Lemma \ref{homomorphism} that $\langle a^g\rangle \cap \langle b^h\rangle=1$. This completes the proof.
\end{proof}

In order to deal with the prime $3$, we need the following lemmas.

\begin{lem}
\label{non-conjugate}
Let $G$ be a $p$-group which is not of maximal class such that $d(G)=2$. Then for every $x\in G$ there exists $t\in \Phi(G)\smallsetminus \{[x,g] \mid g\in G\}$.
\end{lem}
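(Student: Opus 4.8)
The plan is to count. I want to show that the set of commutators $\{[x,g] \mid g\in G\}$ cannot exhaust $\Phi(G)$, by comparing the size of this set with the size of $\Phi(G)$. The key observation is that the map $g\mapsto [x,g]$ factors through the set of cosets on which $x$ acts by conjugation, and its image is controlled by the centralizer of $x$. More precisely, the number of distinct commutators $[x,g]$ equals the index $[G:C_G(x)]$, which is the size of the conjugacy class $\Cl_G(x)$, since $[x,g]=[x,g']$ if and only if $g$ and $g'$ lie in the same right coset of $C_G(x)$. Thus I first record the equality
\[
\lvert \{[x,g] \mid g\in G\} \rvert = \lvert \Cl_G(x) \rvert = [G:C_G(x)].
\]
Since all these commutators lie in $G'\leq \Phi(G)$, the statement will follow once I show $[G:C_G(x)] < |\Phi(G)|$ for every $x\in G$.

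First I would dispose of the central case: if $x\in Z(G)$ then the commutator set is just $\{1\}$, and since $\Phi(G)\neq 1$ (as $d(G)=2$ forces $G$ to be non-cyclic of order at least $p^2$, indeed non-abelian here since a non-cyclic abelian $p$-group of rank $2$ would need checking, but the maximal-class hypothesis handles the extremal case), there is certainly a nontrivial element of $\Phi(G)$ not of the form $[x,g]$. So the real work is the case $x\notin Z(G)$. Here I would use the hypotheses directly. Writing $|G|=p^m$ and $|\Phi(G)|=p^{m-2}$ (this uses $d(G)=2$, so $[G:\Phi(G)]=p^2$), the desired inequality $[G:C_G(x)]<p^{m-2}$ becomes $|C_G(x)|>p^2$, equivalently $|C_G(x)|\geq p^3$. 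Since $x\notin Z(G)$ we have $x\in C_G(x)$ and $Z(G)\leq C_G(x)$, so I want a lower bound on $|C_G(x)|$ coming from the non-maximal-class hypothesis.

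The crux is to translate ``$G$ is not of maximal class'' into a bound that forces the centralizer to be large, or equivalently conjugacy classes to be small. The cleanest route is via the lower central series: a $p$-group of order $p^m$ has maximal class precisely when $\gamma_2(G)/\gamma_3(G)$ and the successive factors are as small as possible, i.e. $|\gamma_i/\gamma_{i+1}|=p$ for $2\leq i\leq m-1$; equivalently $Z(G)$ has order $p$ and $C_G(\gamma_i/\gamma_{i+2})$ behaves extremally. Since $G$ is \emph{not} of maximal class, I expect either $|Z(G)|\geq p^2$ or a non-trivial jump in the lower central factors, and in either case the centralizer bound $|C_G(x)|\geq p^3$ should be extractable. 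I would handle small orders ($m\leq 3$) separately, since there the hypothesis ``not of maximal class'' combined with $d(G)=2$ is restrictive, and for $m\geq 4$ invoke the structural fact that every conjugacy class of a $p$-group of order $p^m$ that is not of maximal class has size at most $p^{m-3}$, so that $[G:C_G(x)]=|\Cl_G(x)|\leq p^{m-3}<p^{m-2}=|\Phi(G)|$.

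The main obstacle will be this last structural step: proving sharply that outside maximal class every class has size at most $p^{m-3}$. The honest way is to note that a class of maximal size $p^{m-2}$ (the largest possible in a group with $d(G)=2$) would force $C_G(x)=\langle x\rangle Z(G)$ to have order exactly $p^2$, and then to derive that $G$ has maximal class from the resulting tight constraint on the lower central series — a contradiction. I would carry this out by showing that such a centralizer of order $p^2$, together with $d(G)=2$, pins down $|\gamma_i/\gamma_{i+1}|=p$ throughout, which is exactly the defining property of maximal class. This step is where the full strength of both hypotheses is used, and it is the part I would expect to require the most care.
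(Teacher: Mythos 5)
Your counting argument is exactly the paper's proof: $\lvert\{[x,g]\mid g\in G\}\rvert=\lvert\Cl_G(x)\rvert=[G:C_G(x)]\leq p^{n-3}<p^{n-2}=\lvert\Phi(G)\rvert$, with the whole weight resting on the fact that a $p$-group not of maximal class has no element with centralizer of order $p^2$. The one step you flag as delicate and only sketch --- deriving maximal class from the existence of a centralizer of order $p^2$ --- is a classical theorem (Huppert, \emph{Endliche Gruppen I}, III.14.23), which the paper simply cites, so your argument is complete once you do the same.
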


\begin{proof}
Note that a $p$-group has maximal class if and only if it has an element with centralizer of order $p^2$ (see \cite[III.14.23]{hup}). Thus for every $x\in G$ we have $|C_G(x)|\geq p^3$, and hence
\[
|\{[x,g] \mid g\in G\}|= |\Cl_G(x)|=|G:C_G(x)|\leq p^{n-3}.
\]
Since $|\Phi(G)|=p^{n-2}$ there exists $t\in \Phi(G)$ such that $t\notin \{[x,g] \mid g\in G\}$.
\end{proof}

\begin{lem}\textup{\cite[Lemma~3.8]{FG} }
\label{intersection}
Let $G$ be a finite $p$-group and let $x \in G \smallsetminus \Phi(G)$ be an element of order $p$.
If $t \in \Phi(G)\smallsetminus \{[x,g] \mid g \in G \}$ then 
\[
\Big(\bigcup_{g\in G} {\langle x\rangle}^g  \Big)
\bigcap
\Big(\bigcup_{g\in G} {\langle xt\rangle}^g \Big)= 1.
\]
\end{lem}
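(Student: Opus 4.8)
The plan is to argue by contradiction. Suppose some $z \neq 1$ lies in both unions. Then $z$ is conjugate to a nontrivial power $x^i$ of $x$ with $1 \le i \le p-1$, and simultaneously conjugate to some power $(xt)^j$ of $xt$ that is nontrivial. The whole point is to squeeze out of this coincidence the conclusion that $t$ is a commutator $[x,c]$, which would contradict the choice of $t$. Everything hinges on upgrading the weak coincidence ``a power of a conjugate of $x$ equals a power of a conjugate of $xt$'' to the strong one ``a conjugate of $x$ equals a conjugate of $xt$''.

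First I would record the relevant structural facts: since $G$ is a finite $p$-group, $G' \le \Phi(G)$ and $G/\Phi(G)$ is elementary abelian, so conjugation acts trivially modulo $\Phi(G)$; and since $t \in \Phi(G)$, we have $(xt)^h \equiv x \pmod{\Phi(G)}$ for every $h \in G$. Reducing the two conjugacy relations modulo $\Phi(G)$ then gives $(x\Phi(G))^i = z\Phi(G) = (x\Phi(G))^j$. As $x \notin \Phi(G)$, the coset $x\Phi(G)$ has order $p$ in $G/\Phi(G)$, whence $i \equiv j \pmod p$; in particular $p \nmid j$.

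Next I would pin down the order of $xt$. Writing $z = (x^g)^i$ with $x^g$ of order $p$ shows $o(z) = p$. Writing also $z = \big((xt)^h\big)^j$ and setting $b = (xt)^h$, the identity $o(b^j) = o(xt)/\gcd(o(xt),j) = p$, together with the fact that $o(xt)$ is a power of $p$ while $p \nmid j$, forces $\gcd(o(xt),j)=1$ and hence $o(xt) = p$. We may therefore take $1 \le j \le p-1$, and combined with $i \equiv j \pmod p$ this yields $i = j$. Now $a := x^g$ and $b := (xt)^h$ both have order $p$ and satisfy $a^i = z = b^i$ with $i$ invertible modulo $p$; raising both sides to the inverse of $i$ modulo $p$ (and using $a^p = b^p = 1$) gives $a = b$, that is $x^g = (xt)^h$. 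Rearranging, $xt = x^{gh^{-1}}$, so $t = x^{-1}(xt) = [x,gh^{-1}]$, contradicting $t \notin \{[x,g] \mid g \in G\}$.

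The main obstacle, and the only place where the hypotheses are genuinely used, is exactly the passage from the weak to the strong coincidence. This requires two ingredients pulling in the same direction: the coset computation in $G/\Phi(G)$ to match the exponents $i$ and $j$, and the order-of-$xt$ argument that lets the common exponent be cancelled. The assumption that $x$ has order $p$ is essential for both, since otherwise $z\Phi(G)$ need not be nontrivial and the matched exponent could not be inverted cleanly; note also that the order of $xt$ is \emph{not} given in advance and must be deduced as above.
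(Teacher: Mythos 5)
Your proof is correct and complete: the reduction modulo $\Phi(G)$ to match the exponents, the deduction that $o(xt)=p$, and the cancellation yielding $x^g=(xt)^h$ and hence $t=[x,gh^{-1}]$ all hold, and this is essentially the standard argument. Note that the paper itself gives no proof of this lemma — it is quoted from \cite[Lemma~3.8]{FG} — and your argument matches the approach used there.
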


\begin{thm}
\label{p=3}
Let $p=3$. Then the follıowing hold.
\begin{enumerate}
	\item The $p$-central quotient $F/\lambda_n(F)$ is a Beauville group if and only if $n\geq 4$.
	\item The series $\{\lambda_n(F)\}_{n\geq 4}$ can be refined to a normal series of $F$ such that two consecutive terms of the series have index $p$ and for every term $N$ of the series $F/N$ is a Beauville group.
\end{enumerate}
\end{thm}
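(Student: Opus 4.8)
I would establish (i) and (ii) in turn, deriving (ii) by re-running the construction used for (i). For the backward implication of (i) I must rule out $n=2$ and $n=3$. The quotient $F/\lambda_2(F)\cong C_3\times C_3$ fails Catanese's criterion. For $n=3$, note that $\gamma_2(F)/\gamma_3(F)$ is cyclic, generated by $[x,y]$, and that (\ref{exp of omega}) with $k=1$ forces $\exp\bigl(F/\gamma_3(F)\bigr)=3$; since the group is nonabelian of order $27$ it is the extraspecial group of exponent $3$. There every nontrivial commutator is central, so the conjugates of $\langle x\rangle$ fill out exactly $M\smallsetminus(Z\smallsetminus 1)$ for the maximal subgroup $M$ containing $x$ and the centre $Z$; hence each $\Sigma(u,v)$ is the union of three of the four maximal subgroups with the two nontrivial central elements deleted. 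As two generating triples must, by counting, reuse at least two of the four maximal subgroups, two such sets always share a maximal subgroup minus $\{c,c^2\}$, which is nontrivial, so no Beauville structure exists.

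For the forward implication I assume $n\geq 4$, so that $G=F/\lambda_n(F)$ satisfies $d(G)=2$ and is not of maximal class (because $\gamma_3(G)/\gamma_4(G)$ has order $p^2$, as $|F/\lambda_4(F)|=3^5$). I would then propose the Beauville structure $\{x,y\}$ and $\{xt_1,y^{-1}t_2\}$, where Lemma \ref{non-conjugate} provides $t_1\in\Phi(G)\smallsetminus\{[x,g]\mid g\in G\}$ and $t_2\in\Phi(G)\smallsetminus\{[y^{-1},g]\mid g\in G\}$. With $A=\{x,y,xy\}$ and $B=\{xt_1,\,y^{-1}t_2,\,xt_1y^{-1}t_2\}$, the nine pairs $(a,b)$ fall into three types. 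If $a\in\{x,y\}$ lies in a different direction of $G/\Phi(G)$ from $b$, then $\langle a^g\rangle$ has order $p$ and meets $\Phi(G)$ trivially while $\langle a^g\rangle\cap\langle b^h\rangle\leq\Phi(G)$, giving $1$. If $a$ and $b$ lie in the same direction with $a$ of order $p$ --- namely $x$ versus $xt_1$, and $y$ versus $y^{-1}t_2$ (using $\langle y\rangle=\langle y^{-1}\rangle$) --- Lemma \ref{intersection} applies directly. Finally, for $a=xy$ against each $b\in B$ I would reuse the homomorphism $\psi\colon x\mapsto s^{-1},\,y\mapsto ss_1$ of Theorem \ref{p>3} into the maximal class group $P$ of order $p^n$: here $\psi(xy)=s_1\in P_1$ has order $o(xy)=p^{\ceil*{(n-1)/(p-1)}}$, whereas every $b\in B$ avoids the direction of $xy$ and so maps into $P\smallsetminus P_1$, an element of order $p$; since $\langle s_1^{\psi(g)}\rangle\leq P_1$ and $\langle\psi(b)^{\psi(h)}\rangle\cap P_1=1$, Lemma \ref{homomorphism} yields $\langle(xy)^g\rangle\cap\langle b^h\rangle=1$.

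For (ii), each $\lambda_n(F)/\lambda_{n+1}(F)$ with $n\geq 4$ is central and elementary abelian in $F/\lambda_{n+1}(F)$, so every intermediate subgroup is normal in $F$ and any full flag refines the factor into index-$p$ steps. I would choose the flag so that each $F/N$ is Beauville by repeating the argument for (i): such $F/N$ is $2$-generated with $x,y$ of order $p$ and is not of maximal class (as $\gamma_3(F/N)/\gamma_4(F/N)=\gamma_3(F)/\gamma_4(F)$ has order $p^2$ once $n\geq 4$), so the first two types of pairs go through verbatim. Only the maximal class group detecting $xy$ needs attention: writing $z=(xy)^{p^{k}}$ with $k=\ceil*{(n-1)/(p-1)}$, the order of $xy$ in $F/N$ equals $p^{k}$ when $z\in N$ and $p^{k+1}$ otherwise, so I would use $P$ of order $p^n$ in the first case and a maximal class group $P_{n+1}$ of order $p^{n+1}$ in the second. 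The latter homomorphism is defined on $F/N$ only when $N\leq K:=\ker(F\to P_{n+1})$, and since $\psi(z)=s_1^{p^k}\neq 1$ shows $z\notin K$, I would thread the flag first through $K\cap\lambda_n(F)$ and then adjoin $z$, which guarantees every term meets exactly one of the two conditions. As the refinement has index-$p$ steps beginning at $F/\lambda_4(F)$ of order $3^5$, this simultaneously produces Beauville $3$-groups of every order $3^m$ with $m\geq 5$.

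I expect the decisive obstacle to lie in this last order-matching: one must keep the homomorphism to a maximal class group well-defined on each $F/N$ while forcing $o(\psi(xy))=o(xy)$, which is exactly what constrains the refinement to pass through $K\cap\lambda_n(F)$ and through $z$. Verifying that such a flag can always be completed with index-$p$ steps, together with the computation of $o(xy)$ along the factor, is the technical heart of the argument.
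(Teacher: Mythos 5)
Your proposal is correct and follows essentially the same route as the paper: the same epimorphism onto the maximal class group $P$ combined with Lemma \ref{homomorphism} for the pairs involving $xy$, Lemmas \ref{non-conjugate} and \ref{intersection} for the pairs sharing a direction modulo $\Phi(G)$, the Easterfield bound (\ref{exp of omega}) to pin down $o(xy)$, and a refinement threaded through the kernel of the map to the maximal class group for part (ii). The only differences are cosmetic: you exclude $n=2,3$ by direct computation where the paper cites that the smallest Beauville $3$-group has order $3^5$, and you treat $n=4$ by the general argument rather than by identifying $F/\lambda_4(F)$ with the known group $H$ of order $3^5$.
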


\begin{proof}
Since the smallest Beauville $3$-group is of order $3^5$, the quotient  $F/\lambda_n(F)$ can only be a Beauville group if $n\geq 4$. We first assume that $n=4$. Now consider the group
\[
H= \langle a,b,c,d,e \mid a^3=b^3=c^3=d^3=e^3=1, [b,a]=c, [c,a]=d, [c,b]=e \rangle,
\]
where we have omitted all commutators between generators which are trivial. This is the smallest Beauville $3$-group. Since $\lambda_4(H)=1$, $F/\lambda_4(F)$ maps onto $H$. On the other hand, it is clear that $|F/\lambda_4(F)|\leq 3^5$ and so  $F/\lambda_4(F)\cong H$. Consequently, $F/\lambda_4(F)$ is a Beauville group. Thus we assume that $n\geq5$.

Now let us call $G$ the quotient group $F/\lambda_n(F)$. Consider the map $\psi \colon G\longrightarrow P$ defined in the proof of Theorem \ref{p>3}. Since $\psi$ is an epimorphism, we have $\psi(\lambda_{n-1}(G))=\lambda_{n-1}(P)$. Observe that the subgroup $\Ker \psi \cap \lambda_{n-1}(G)$ has index $3$ in $\lambda_{n-1}(G)$, since $\lambda_{n-1}(P)$ is of order $3$ . Choose a normal subgroup $N$ of $F$ such that $\lambda_{n}(F)\leq N < \lambda_{n-1}(F)$ and $N/\lambda_n(F)\leq \Ker \psi$. Then $\psi$ induces an epimorphism $\overline{\psi}$ from $F/N$ to $P$.

 We will see that $L=F/N$ is a Beauville group, which simultaneously proves (i) and (ii). Let $u$ and $v$ be the images of $x$ and $y$ in $L$, respectively. Set $k={\ceil*{\frac{n-1}{2}}}$. Then $o(uv)\leq o(xy\lambda_n(F))=3^k$. On the other hand, since $\overline{\psi}(uv)=s_1$, we have $o(uv)\geq o(s_1)= 3^k$, and consequently we get $o(uv)=3^k$ in $L$. Since $F/\lambda_4(F)\cong H$ is not of maximal class, $L$ is not of maximal class. Thus, by Lemma \ref{non-conjugate}, there exist $z, t \in \Phi(L)$ such that  $z \notin \{ [u,l] \mid l\in L\}$ and $t \notin \{[v,l] \mid l\in L\}$. We claim that $\{u,v\}$ and $\{(uz)^{-1}, vt\}$ form a Beauville structure for $L$. Let $A=\{u,v,uv\}$ and $B=\{(uz)^{-1}, vt,  (uz)^{-1}vt\}$.

If $a=u$, which is of order $3$, and $b=vt$ or $(uz)^{-1}vt$, then we get $\langle a^g \rangle \cap \langle b^h\rangle=1$ for every $g,h\in L$, as in the proof of Theorem \ref{p>3}. When $a=v$ and $b=(uz)^{-1}$ or $(uz)^{-1}vt$, the same argument applies. If we are in one of the following cases: $a=u$ and $b=(uz)^{-1}$, or $a=v$ and $b=vt$, then the  condition 
$\langle a^g \rangle \cap \langle b^h\rangle=1$ follows from Lemma \ref{intersection}.

It remains to check the case when $a=uv$ and $b\in B$. For every $b\in B$, we have $\overline{\psi}(b) \in P\smallsetminus P_1$, which has order $3$. Since $o(uv)=o(s_1)$, the condition $\langle a^g \rangle \cap \langle b^h\rangle=1$  follows from Lemma \ref{homomorphism}, as in the proof of Theorem \ref{p>3}. This completes the proof.
\end{proof}

Thus the quotients in Theorem \ref{p=3} constitute an infinite family of Beauville $3$-groups of order $3^n$ for all $n\geq 5$.

In \cite{FG}, it was given the first explicit infinite family of Beauville $3$-groups, by considering quotients of the Nottingham group over $\F_3$. We will show that these two infinite families of Beauville $3$-groups only coincide at the group of order $3^5$.

Before proceeding we recall the definition of the Nottingham group and some of its properties. The \emph{Nottingham group\/} $\No$ over the field $\F_p$, for odd $p$, is the (topological) group of normalised automorphisms of the ring $\F_p[[t]]$ of formal power series. For any positive integer $k$, the automorphisms $f\in\No$ such that $f(t)=t+\sum_{i\ge k+1} \, a_it^i$ form an open normal subgroup $\No_k$ of $\No$ of index $p^{k-1}$. The lower central series of $\No$ is given by
\begin{equation}
\label{lcs}
\gamma_i(\No)= \No_{r(i)},
\quad
\text{where}
\quad
 r(i)=i+1+\floor*{\frac{i-2}{p-1}},
\end{equation}
and 
\begin{equation}
\label{power subgroup}
\No_k^p=\No_{kp+r},
\quad
\text{where}
\quad
0\leq r\leq p-1
\quad
\text{is the residue of $k$ modulo $p$}
\end{equation}
(see \cite{cam}, Remark 1 and Theorem 6, respectively).

 Also, each non-trivial normal subgroup of $\No$ lies between some $\gamma_i(\No)$ and $\gamma_{i+1}(\No)$ (see \cite {cam}, Remark 1 and Proposition 2).

By Theorem 3.10 in \cite{FG}, if $p=3$ then a quotient $\No/\No_k$ is a Beauville group if and only if $k\geq 6$ and $k\neq z_m$ for all $m\geq 1$, where $z_m= p^m+p^{m-1}+\dots+p+2$. Furthermore, by Theorem 3.11 in \cite{FG}, for $i\geq1$ there exists a normal subgroup $\WW$ between $\No_{ip+3}$ and $\No_{ip+1}$ such that $\No/\WW$ is a Beauville group. This gives quotients of $\No$ which are Beauville groups of every order $3^n$ with $n\geq5$.

\begin{thm}
	\label{non-coincidence}
 Let $N\neq \gamma_4(F)$ be a normal subgroup of $F$ such that $F/N$ is a Beauville group. Then $F/N$ is not isomorphic to any quotient of $\No$ which is a Beauville group. On the other hand, $F/\gamma_4(F)$ is isomorphic to $\No/ \gamma_4(\No)$. 
\end{thm}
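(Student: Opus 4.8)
The plan is to treat the two assertions separately, settling the isomorphism at order $3^5$ first and then the non-coincidence for the larger quotients.

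For the second assertion I would use a universal-property/order-count argument. By (\ref{lcs}) we have $\gamma_4(\No)=\No_6$, so $\No/\gamma_4(\No)$ has order $3^5$ and nilpotency class $3$, and it is generated by two elements of order $3$ (the order-$3$ generators of $\No$ used in \cite{FG}). Since $F=\langle x,y\mid x^3,y^3\rangle$ is free in the class of groups generated by two elements of order dividing $3$, the quotient $F/\gamma_4(F)$ is the largest such group of class at most $3$; hence there is an epimorphism $F/\gamma_4(F)\twoheadrightarrow\No/\gamma_4(\No)$, and as both groups have order $3^5$ (recall $F/\gamma_4(F)\cong H$) this epimorphism is an isomorphism.

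For the first assertion the strategy is to separate the two explicit families by an isomorphism invariant, and the natural choice given the tools above is the exponent weighed against the order, equivalently against the class. On the free-product side, the computation in the proof of Theorem \ref{p=3} gives $o(uv)=3^{\ceil*{(n-1)/2}}$, which together with the Easterfield bound (\ref{exp of omega}) shows that every Beauville quotient $F/N$ with $\lambda_n(F)\le N<\lambda_{n-1}(F)$ has exponent exactly $3^{\ceil*{(n-1)/2}}$; since such a quotient has class close to $n$, its exponent grows like $3^{c/2}$ in the class $c$. On the Nottingham side, the power-subgroup formula (\ref{power subgroup}) shows that iterated $p$-th powers push the depth from $k$ to $kp+r$, so a slope element of $\No/\No_k$ has order $3^{j}$ with $j$ only of size $\log_3 k$; as the class of $\No/\No_k$ is comparable to $k$, the exponent of an $\No$-quotient grows only logarithmically in its class. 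Comparing quotients of equal order, these two growth rates are eventually incompatible, forcing non-isomorphism for all sufficiently large orders.

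The main obstacle is the bottom of the two families. For a range of small orders both families realise the same small exponent (for instance exponent $9$), so the exponent invariant alone does not separate them there, and one is forced to pin down the precise isomorphism type of the finitely many small quotients. Here I would exploit the fine structure of $\No$—its lower central ranks $2,1,2,1,\dots$ read off from (\ref{lcs}) and the fact that every non-trivial normal subgroup of $\No$ lies between two consecutive terms $\gamma_i(\No)$ and $\gamma_{i+1}(\No)$—to determine exactly which small quotients occur, and then compare each one directly with the corresponding $F/N$ to confirm that the single genuine coincidence is the group $H$ of order $3^5$. I expect this low-order comparison, rather than the asymptotic exponent estimate, to be the delicate heart of the argument, since it is precisely where the two families come closest.
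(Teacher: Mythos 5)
Your argument for the second assertion is fine and is a genuinely different route from the paper's: you obtain the isomorphism $F/\gamma_4(F)\cong\No/\gamma_4(\No)$ from the universal property of the free product (an epimorphism between two groups of order $3^5$), whereas the paper simply invokes the uniqueness of the Beauville group of order $3^5$ from \cite{BBF}. Either works, provided you justify that $\No$ (for $p=3$) is generated by two elements of order $3$.

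The first assertion, however, is where the content lies, and your proposal has a genuine gap there --- one you partly acknowledge yourself. First, the asymptotic step is not established and is not obviously true. On the Nottingham side the exponent of $\No/\No_k$ is roughly linear in $k$, hence roughly linear in $\log_3|\No/\No_k|$. On the free-product side the exponent is $3^{\ceil*{(n-1)/2}}$, but to compare quotients \emph{of equal order} you must express $n$ in terms of $\log_3|F/N|$, i.e.\ you need the growth of the lower central ranks of $C_3*C_3$. Since $C_3*C_3$ is virtually free of rank $2$, these ranks grow exponentially, so the $F$-side exponent is also only polynomial in $\log_3|F/N|$; whether the two exponent functions eventually separate depends on the precise growth rate, which you never compute. (Also, your claim that $\exp(F/N)$ is \emph{exactly} $3^{\ceil*{(n-1)/2}}$ uses $\overline{\psi}(uv)=s_1$, which requires $N\leq\Ker\psi$ and does not hold for an arbitrary normal subgroup $N$.) Second, even granting the asymptotics, the entire low-order comparison --- which you correctly identify as ``the delicate heart of the argument'' --- is left undone, and it is not a small range: both families contain quotients of exponent $9$ of every order from $3^5$ up to roughly $3^{12}$, so the exponent invariant is blind exactly where the families are closest.

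The paper avoids both problems with one uniform reduction. If $F/N\cong\No/\WW$ with $\gamma_n(F)\leq N<\gamma_{n-1}(F)$ and $n\geq5$, then since every non-trivial normal subgroup of $\No$ lies between consecutive terms of the lower central series and both groups have class $n-1$, one gets $\WW\leq\gamma_5(\No)$ (with equality when $n=5$), so the isomorphism descends to the class-$4$ quotients: $F/\gamma_5(F)N\cong\No/\gamma_5(\No)$. Now a single finer invariant finishes every case at once: $\gamma_2(\No/\gamma_5(\No))=\No_3/\No_7$ has exponent $3$ because $\No_3^3=\No_9\leq\No_7$, whereas $F/\gamma_5(F)N$ maps onto the maximal-class group $P$ of order $3^5$ with $\exp P'=3^2$, so its derived subgroup has exponent at least $9$. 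If you want to salvage your plan, this is the kind of invariant (exponent of a term of the lower central series of a fixed small quotient, rather than the exponent of the whole group) that you would need for the small orders; at that point the asymptotic half of your argument becomes unnecessary.
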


\begin{proof}
	Since there is only one Beauville group of order $3^5$ \cite{BBF}, $F/\gamma_4(F)$ is necessarily isomorphic to $\No/\gamma_4(\No)$. Now suppose that $F/N\cong \No/ \WW$ where $\gamma_n(F)\leq N<\gamma_{n-1}(F)$ for $n\geq5$ and $F/N$ is a Beauville group. Since $F/N$ is of class $n-1$ and $\WW$ lies between two consecutive terms of the lower central series, we have $\gamma_n(\No)\leq \WW < \gamma_{n-1}(\No)$. Note that if $n=5$ then $\No_7=\gamma_5(\No)\leq \WW < \gamma_4(\No)=\No_6$ and so $\WW=\gamma_5(\No)$. If $n>5$ then $\WW \leq \gamma_5(\No)$. Consequently the isomorphism  $F/N \cong \No/\WW$ implies that $F/\gamma_5(F)N \cong \No/ \gamma_5(\No)$. We next show that this is not possible.
	
	Note that by (\ref{lcs}), we have $\gamma_2(\No)=\No_3$ and by (\ref{power subgroup}), $\No_3^3=\No_9$. Thus the exponent of $\gamma_2(\No/\gamma_5(\No))$ is $3$. On the other hand, as in the proof of Theorem \ref{p=3}, there is an epimorphism from $F/\gamma_5(F)N$ to a $p$-group of maximal class $P$ of order $3^5$ with $\exp P'=3^2$. Thus $\No/\gamma_5(\No)$ cannot be isomorphic to $F /\gamma_5(F)N$.
	
\end{proof}

\section*{Acknowledgments}
I would like to thank G. Fern\'andez-Alcober, N. Gavioli and C. M. Scoppola for helpful comments and suggestions. Also, I would like to thank the Department of Mathematics at the University of the Basque Country for its hospitality while this paper was being written.

\vspace{0.2cm}


\begin{thebibliography}{99}

\bibitem{BBF}
N.\ Barker, N.\ Boston, and B.\ Fairbairn,
A note on Beauville $p$-groups,
\textit{Experiment.\ Math.\/} \textbf{21} (2012), 298--306.

\bibitem{BBPV}
N.\ Barker, N.\ Boston, N.\ Peyerimhoff, and A.\ Vdovina,
An infinite family of $2$-groups with mixed Beauville structures,
\textit{Int.\ Math.\ Res.\ Notices \/} \textbf{11} (2015), 3598--3618.

\bibitem{BCG}
I.\ Bauer, F.\ Catanese, and  F.\ Grunewald,
Beauville surfaces without real structures, I,
in Geometric Methods in Algebra and Number Theory,
\textit{Progr. \ Math.\/} \textbf{235}, Birkh\"auser, Boston, 2005, pp.\ 1--42.

\bibitem{bos}
N.\ Boston,
A survey of Beauville $p$-groups,
in Beauville Surfaces and Groups, editors I.\ Bauer, S.\ Garion, A.\ Vdovina,
\textit{Springer Proceedings in Mathematics \& Statistics\/}, Volume 123, Springer, 2015, pp.\ 35--40.

\bibitem{cam}
R.\ Camina,
The Nottingham Group,
in New Horizons in Pro-$p$ Groups, editors M.\ du Sautoy, D.\ Segal, A.\ Shalev,
\textit{Progress in Mathematics\/}, Volume 184, Birkh\"auser, 2000, pp.\ 205--221.

\bibitem{cat}
F.\ Catanese,
Fibred surfaces, varieties isogenous to a product and related moduli spaces,
\textit{Amer. \ J. \ Math.\/} \textbf{122} (2000), 1--44.

\bibitem{eas}
T.E.\ Easterfield,
The orders of products and commutators in prime power
groups,
\textit{Proc.\ Cambridge \ Phil.\ Soc.\/} \textbf{36} (1940), 14--26.

\bibitem{FMP}
B.\ Fairbairn, K.\ Magaard, and C.\ Parker,
Generation of finite quasisimple groups with an application to groups acting on Beauville surfaces,
\textit{Proc.\ Lond.\ Math.\ Soc.\/} (3) \textbf{107} (2013), 744--798.

\bibitem{FMPc}
B.\ Fairbairn, K.\ Magaard, and C.\ Parker,
Corrigendum: Generation of finite quasi\-simple groups with an application to groups acting on Beauville surfaces,
\textit{Proc.\ Lond.\ Math.\ Soc.\/} (3) \textbf{107} (2013), 1220.

\bibitem{FG}
G.A.\ Fern\'andez-Alcober and \c{S}.\ G\"ul,
Beauville structures in finite $p$-groups,
preprint, available at \texttt{arXiv:1507.02942v2 [math.GR]}.

\bibitem{GLL}
S.\ Garion, M.\ Larsen, and A.\ Lubotzky,
Beauville surfaces and finite simple groups,
\textit{J.\ Reine Angew.\ Math.\/} \textbf{666} (2012), 225--243.

\bibitem{GJ}
G.\ Gonz\'alez-Diez and A.\ Jaikin-Zapirain,
The absolute Galois group acts faithfully on regular dessins and on Beauville surfaces,
\textit{Proc.\ London \ Math.\ Soc\/} (3) \textbf{111} (2015), 775--796.

\bibitem{GM}
R.\ Guralnick and G.\ Malle,
Simple groups admit Beauville structures,
\textit{J.\ London Math.\ Soc.\/} \textbf{85} (2012), 649--721.

\bibitem{hup}
B.\ Huppert,
\textit{Endliche Gruppen, I\/},
Springer, 1967.

\bibitem{HB}
B.\ Huppert and N.\ Blackburn,
\textit{Finite Groups II\/},
 Springer-Verlag, Berlin, 1982.
 
 \bibitem{LM}
 C.\ R.\ Leedham-Grenn and S.\ McKay,
 \textit{The Structure of Groups of Prime Power Order\/},
 Oxford University Press, New York, 2002.
 
 \bibitem{SV}
 J.\ Stix and A.\ Vdovina,
 Series of $p$-groups with Beauville structure,
 \textit{Monatsh.\ Math.\/} (2015),
 doi:10.1007/s00605-015-0805-9,
 arXiv:1405.3872 [math.GR].

  

\end{thebibliography}
\end{document}